\theoremstyle{plain}
\newtheorem{theorem}{Theorem}[section]
\newtheorem{lemma}[theorem]{Lemma}
\newcommand{\C}{\mathbb{C}}
\newcommand{\Q}{\mathbb{Q}}
\newcommand{\Z}{\mathbb{Z}} 
\newcommand{\N}{\mathbb{N}}
\title{Roots of Modular Units}
\author{A. Beeson}
\begin{document}
\maketitle

\begin{abstract}
Let $p$ be a prime. We prove that if a modular unit has a $p^{th}$ root that is again a modular unit then the level of that root is at most $p$ times the level of the original unit. 
\end{abstract}

\section{Introduction}

We prove the result in section~\ref{level}, but because the literature contains inconsistent definitions, in section~\ref{background} we give  a summary of the relevant definitions. The reader interested in learning more about the theory of modular functions should see~\cite{diamondshurman:2005}.    

Our main results is that the $p^{th}$ root of a modular unit, if it is again a modular unit, has the level that one would expect.  The source of this question was the study of the Siegel functions and their square roots. See~\cite{kubert:1981}. The result could likely be extended using similar techniques to the general case of a cyclic Galois group.

\section{On the level of a root of a modular function}\label{level}

\begin{theorem}  Let $\mathcal{F}_N$ be the field of modular functions of level $N$ with Fourier coefficients in $\Q(\zeta_N)$; that is, $\mathcal{F}_N$ is the fixed field of $\Gamma(N)$ inside $\mathcal{F}$, the full field of modular functions.  If $p$ is a rational prime and $f(z)\in \mathcal{F}_{N}\setminus \mathcal{F}_{N}\sp{p}$ is a modular unit and $f(z)^{1/p}$ has level $M$ for some $M\in\N$ with $N | M$ then, in fact, $f(z)^{1/p}$ has level $pN$.  
\end{theorem}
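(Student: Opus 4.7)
The plan is to introduce a character $\chi: \Gamma(N) \to \mu_p$ that measures the failure of $g := f(z)^{1/p}$ to be $\Gamma(N)$-invariant, and then show $\chi$ is trivial on $\Gamma(pN)$. For $\gamma \in \Gamma(N)$, the identity $(g|\gamma)^p = f|\gamma = f = g^p$ places $g|\gamma/g$ in $\mu_p$; since the slash action of $\mathrm{SL}_2(\Z)$ on $\mathcal{F}$ fixes the subfield of constants, the map $\chi(\gamma) := g|\gamma/g$ is a homomorphism $\Gamma(N) \to \mu_p$. The hypothesis $g \in \mathcal{F}_M$ gives $\chi|_{\Gamma(M)} = 1$, while $f \notin \mathcal{F}_N\sp{p}$ forces $\chi$ to be nontrivial. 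The desired conclusion $g \in \mathcal{F}_{pN}$ is equivalent to $\chi|_{\Gamma(pN)} = 1$. Replacing $M$ by $\operatorname{lcm}(M,pN)$, which is harmless since $g$ still lies in the larger field $\mathcal{F}_{\operatorname{lcm}(M,pN)}$, I may assume $pN \mid M$, so that $\chi$ descends to a character of the finite group $\Gamma(N)/\Gamma(M) = \ker\bigl(\mathrm{SL}_2(\Z/M) \to \mathrm{SL}_2(\Z/N)\bigr)$.

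By the Chinese Remainder Theorem, $\Gamma(N)/\Gamma(M)$ decomposes as a direct product over primes $q \mid M$, and only the factor at $q = p$ can carry a $\mu_p$-character. Setting $a := v_p(N)$, $b := v_p(M)$, and $K_i := \ker\bigl(\mathrm{SL}_2(\Z_p) \to \mathrm{SL}_2(\Z/p^i)\bigr)$, the problem reduces to the following purely group-theoretic statement: every $\mathbb{F}_p$-valued character of $K_a/K_b$ factors through the one-step quotient $K_a/K_{a+1}$. Since $K_a/K_{a+1}$ is precisely the $p$-part of $\Gamma(N)/\Gamma(pN)$, this would suffice.

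The main obstacle is this group-theoretic claim. The expansions $(I + p^a X)^p \equiv I + p^{a+1}X$ and $[I + p^a X, I + p^a Y] \equiv I + p^{2a}[X,Y]$, both modulo appropriate higher powers of $p$, show that $p$-th powers and commutators in $K_a$ land inside $K_{a+1}$, so the Frattini subgroup of $K_a/K_b$ is contained in $K_{a+1}/K_b$. The reverse inclusion—that every element of $K_{a+1}/K_b$ is a product of $p$-th powers and commutators of elements of $K_a$—I would establish by induction on $b - a$: given $\delta = I + p^{a+1}B \in K_{a+1}/K_b$, lift $B$ to a trace-zero integer matrix $B'$, set $\gamma := I + p^a B' \in K_a$, and verify that $\gamma^p \cdot \delta^{-1}$ lies in $K_{a+2}/K_b$, to which the inductive hypothesis applies. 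The case $a = 0$ (i.e., $p \nmid N$) is separate: for $p \geq 5$ the group $\mathrm{SL}_2(\Z/p^b)$ is perfect, so $\chi$ must already be trivial, forcing $g \in \mathcal{F}_N$ and contradicting $f \notin \mathcal{F}_N\sp{p}$, which makes this sub-case vacuous; for the small primes $p = 2, 3$ a direct computation of the abelianization of $\mathrm{SL}_2(\Z/p^b)$ completes the argument.
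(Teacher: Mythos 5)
Your proposal takes a genuinely different route from the paper. The paper argues geometrically: it considers the index-$p$ subgroup $\Gamma_1 \subseteq \Gamma(N)$ fixing $f^{1/p}$, compares fan widths of fundamental domains at cusps, and invokes Schoeneberg's theorem identifying the conductor with the least common multiple of the fan widths. You instead argue by Kummer theory: the obstruction character $\chi(\gamma) = (g|\gamma)/g \in \mu_p$, the Chinese Remainder decomposition of $\Gamma(N)/\Gamma(M)$, and the structure of the congruence kernels $K_i$ in $SL_2(\Z/p^b)$. Your reduction is clean and correct as far as it goes: $\chi$ is a well-defined homomorphism killing the $q$-parts for $q \neq p$, and the theorem does come down to showing that every $\mu_p$-valued character of the $p$-part is trivial on $K_{a+1}$. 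This is arguably more transparent than the fan-width argument, and it localizes exactly where the content of the theorem sits.

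However, there are two genuine gaps in the group-theoretic half. First, your inductive step fails when $p=2$ and $a = v_2(N) = 1$. You need $\gamma^p\delta^{-1} \in K_{a+2}$, but $(I + 2B')^2 = I + 4B' + 4B'^2$, and the term $4B'^2$ sits at level $2a = a+1$, not $a+2$; for instance $B' = \left(\begin{smallmatrix}1&0\\0&-1\end{smallmatrix}\right)$ gives $4B'^2 = 4I \not\equiv 0 \pmod 8$. The claim $K_{a+1} \subseteq K_a^2[K_a,K_a]K_b$ is still true here, but proving it requires showing that the image of the \emph{nonlinear} map $B \mapsto B + B^2$ on $\mathfrak{sl}_2(\mathbb{F}_2)$, together with the commutators $[X,Y]$, generates all of $K_1/K_2 \cong \mathfrak{sl}_2(\mathbb{F}_2)$ --- a short computation, but not the one you wrote down, and the single-element lift $\gamma$ must be replaced by a product. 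Second, the case $a=0$ with $p \in \{2,3\}$ is not a throwaway: it is precisely the case relevant to the paper's later discussion of $\sqrt{j(z)-1728}$ (level $1$ passing to level $2$), and "a direct computation of the abelianization of $SL_2(\Z/p^b)$" is doing real work there. It can be completed --- since $SL_2(\Z) \twoheadrightarrow SL_2(\Z/p^b)$ and $SL_2(\Z)^{\mathrm{ab}} \cong \Z/12$ is cyclic, the abelianization of $SL_2(\Z/p^b)$ is cyclic, hence has a unique $\mu_p$-quotient, which is already realized through $SL_2(\Z/p)$ --- but you should say this; as written the hardest sub-case is asserted rather than proved. A minor further point: your lift $\gamma = I + p^aB'$ must actually land in $SL_2$, which requires correcting $B'$ by higher-order terms to fix the determinant.
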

\begin{proof}
We assume $f(z)^{1/p}$ is known to be invariant under the subgroup $\Gamma(M)\subseteq \Gamma(N)$.  We will show that $f(z)^{1/p}$ is invariant under $\Gamma(pN)$.

Let $\Gamma_{1}$ be the subgroup of $\Gamma(N)$ that fixes $f(z)^{1/p}$; ie, for all $A \in \Gamma_{1}$ and $z \in \mathcal{H}^{*}$,  

\[
f(A\circ z)^{1/p} = f(z)^{1/p}.
\]

\noindent Because $\mathcal{F}_{N}(f(z)^{1/p})$ is a degree $p$ extension of $\mathcal{F}_{N}$, the index $[\Gamma(N):\Gamma_{1}] = p$, and, thus, $\Gamma_{1}$ is a finite index subgroup of $\Gamma$ as well.  Furthermore, because $f(z)^{1/p}$ is of level $M$, $\Gamma(M) \subseteq \Gamma_{1}$.  So we have the linear ordering of fields 
\[
\Q(j(z)) \subseteq \mathcal{F}_N \subseteq \mathcal{F}^{\Gamma_1} \subseteq \mathcal{F}_M \subseteq \mathcal{F}
\]
 where $\mathcal{F}^{\Gamma_1}$ denotes the fixed field of $\Gamma_1$ inside $\mathcal{F}$.


Let $\mathcal{D}$ be a fundamental domain for $\Gamma$; so $\mathcal{D}$ is a simply connected subset of $\mathcal{H}^{*}$ such that $\mathcal{D}$ contains precisely one point from each $\Gamma$-orbit.  If $\mathcal{D}_{1}$ is a fundamental domain for the subgroup of finite index $\Gamma_{1}\subseteq \Gamma$ then it is made up of translates of $\mathcal{D}$ by a full set of coset representatives for $\Gamma_{1}$ inside $\Gamma$.  Such a translate is called a \emph{modular triangle}.  Define the \emph{fan width} of a fundamental domain at a cusp $\alpha$ to be the order of the cyclic group that permutes the  $\Gamma_{1}$-inequivalent modular triangles meeting at $\alpha$.  Schoeneberg proves in~\cite{schoeneberg:1974} that the conductor of a group $\Gamma_{1}$ is equal to the least common multiple of the fan widths at the rational cusps.

The index of $\Gamma(N)$ in $\Gamma$ is the size of $SL_{2}(\Z/N\Z)/\{\pm I\}$, which is
\[ 
l = [\Gamma:\Gamma(N)] = \frac{1}{2}N^{3}\prod_{p | N}(1 - \frac{1}{p^{2}}).
\] 
\noindent As observed above, $[\Gamma(N):\Gamma_{1}] = p$, hence the group of automorphisms of $\mathcal{F}^{\Gamma_{1}}$ fixing $\mathcal{F}_{N}$ is cyclic.  If we let $\sigma$ be a generator then $\Gamma(N)$ decomposes as the disjoint union
\[
\Gamma(N)=\Gamma_{1}\cup\sigma\Gamma_{1}\cup\sigma^2\Gamma_1\cup\cdots\cup\sigma^{p-1}\Gamma_{1}.
\]
\noindent And if $\{A_{1}, \ldots, A_{l}\}$ is a complete set of representative for $\Gamma/\Gamma(N)$ then  
\[
\{A_{1}, \ldots, A_{l}, \sigma A_{1}, \ldots, \sigma A_{l}, \sigma^2 A_{1}, \ldots, \sigma^2 A_{l}, \ldots, \sigma^{p-1} A_1, \ldots, \sigma^{p-1}A_l \}
\]
 is a complete set of representatives for $\Gamma/\Gamma_{1}$.  Recalling our notation $C(\Gamma(N))$ for the cusps of $\mathcal{H}/\Gamma(N)$, we see that 
\[
C(\Gamma(N)) \subset C(\Gamma_{1})
\]
\noindent and if $\alpha$ is a cusp of $\Gamma_{1}$ then $\sigma^i \alpha \in C(\Gamma(N))$ for some $i$ with $1 \leq i \leq p$. 

Choose $\{A_{1}, \ldots, A_{l}\}$ to be a complete set of representatives for $\Gamma/\Gamma(N)$ such that 
\[
\mathcal{D}_{N} =  \cup_{i}A_{i}(\mathcal{D})
\]
\noindent is a fundamental domain for $\mathcal{H}/\Gamma(N)$. Let $\mathcal{D}_{1}$ be a fundamental domain for $\mathcal{H}/\Gamma_{1}$.

Schoeneberg's theorem implies that the least common multiple of the fan widths for $\mathcal{D}_{N}$ is $N$. We will use this to show that, for any cusp $\alpha$ of $\Gamma_{1}$, the fan width of $\mathcal{D}_{1}$ at $\alpha$ divides $pN$ so, by Schoeneberg's theorem, the conductor of $\Gamma_{1}$ divides $pN$.  Because $f(z)^{1/p}\notin \mathcal{F}_{N}$, $f(z)^{1/p}$ must have level $pN$.

Let $\alpha$ be a cusp of $\Gamma_{1}$.  As observed above, this implies $\sigma^i \alpha$ is a cusp of $\Gamma(N)$ for at least one $i$ with $1 \leq i \leq p$.  Letting $\beta = \sigma^i \alpha$ be a translate of $\alpha$ that is a cusp of $\mathcal{D}_{N}$, we see that multiplication by $\sigma^i$ is a homeomorphism between a neighborhood of $\alpha$ and a neighborhood of $\beta$.  Thus, it suffices to prove the result for the cusps of $\mathcal{D}_{1}$ that are also cusps of $\mathcal{D}_{N}$.

\begin{lemma}  If $\alpha$ is a cusp of $\Gamma_{1}$ and of $\Gamma(N)$ that is of fan width $n$ for $\mathcal{D}_{N}$ then its width for $\mathcal{D}_{1}$ is $n$ or $pn$.   
\end{lemma}
\begin{proof}
Recall $\Gamma_{1} \subseteq \Gamma(N)$ so $\Gamma \setminus \Gamma(N) \subseteq \Gamma \setminus \Gamma_{1}$ and that the fan width $n$ of $\mathcal{D}_{1}$ at the cusp $\alpha$ is the order of the cyclic group that permutes the $n$ $\Gamma_{1}$-inequivalent triangles meeting at $\alpha$.  

If two triangles are $\Gamma_{N}$-inequivalent then they are $\Gamma_{1}$-inequivalent so, assuming the width for $\mathcal{D}_{N}$ is $n$, the width for $\Gamma_{1}$ is at least $n$.  Then since $[\Gamma(N):\Gamma_{1}] = p$, we see that the width of a triangle for $\Gamma_{1}$ is no more than $pn$.
\end{proof}

This concludes the proof of the theorem so any $p^{th}$ root of a level $N$ modular function that has a level, in fact, has level $N$ or $pN$.  As $f(z)^{1/p}$ is not level $N$ by assumption, it must be level $pN$.
\end{proof}

The special case we are currently most interested in is when $p=2$, in which case we have the following theorem and its corollary.

\begin{theorem}  If $f(z)\in \mathcal{F}_{N}\setminus \mathcal{F}_{N}^{2}$ is a modular unit and $\sqrt{f(z)}$ has level $M$ for some $M\in\N$ with $N | M$ then, in fact, $\sqrt{f(z)}$ has level $2N$.  
\end{theorem}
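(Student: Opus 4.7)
The plan is essentially to note that this is just the $p=2$ instance of the main theorem already proved above, so the proof reduces to a one-line invocation. Concretely, I would set $p=2$ in the theorem statement, observe that the hypothesis $f(z)\in\mathcal{F}_N\setminus\mathcal{F}_N^2$ is precisely the condition $f(z)\in\mathcal{F}_N\setminus\mathcal{F}_N^p$ with $p=2$, and that $\sqrt{f(z)}=f(z)^{1/p}$ in this case. Since $f$ is assumed to be a modular unit and $\sqrt{f(z)}$ is assumed to have some level $M$ divisible by $N$, all hypotheses of the theorem are satisfied.

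The conclusion of the main theorem then gives that $f(z)^{1/p}=\sqrt{f(z)}$ has level $pN=2N$, which is exactly the assertion. There is no real obstacle here; the only thing to verify is that the hypothesis "$f$ is not a $p^{th}$ power in $\mathcal{F}_N$" is exactly what ensures $\mathcal{F}_N(\sqrt{f})$ is a genuine degree-$p$ extension rather than $\mathcal{F}_N$ itself, so that the lemma about fan widths rules out level $N$ and forces level $2N$.

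Thus the proof consists of a single sentence appealing to the preceding theorem with $p=2$.
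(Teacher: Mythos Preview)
Your proposal is correct and matches the paper exactly: the paper also presents this theorem as the $p=2$ instance of the preceding general result, giving no separate proof beyond that remark. There is nothing to add.
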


\begin{theorem}  If $f(z)\in \mathcal{F}_{N}\setminus \mathcal{F}_{N}^{2}$ is a modular unit with $\sqrt{f(z)} \not\in \mathcal{F}_{N}(\sqrt{j(z)-1728})$ then $\sqrt{f(z)}$ is not level $M$ for any $M\in\N$. 
\end{theorem}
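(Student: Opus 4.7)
First I would argue via the contrapositive: assume $\sqrt{f}$ is a modular function of some level $M$, and aim to conclude $\sqrt{f}\in\mathcal{F}_N(\sqrt{j(z)-1728})$. Since any $\Gamma(M)$-invariant function is also $\Gamma(\mathrm{lcm}(M,N))$-invariant, we may replace $M$ by $\mathrm{lcm}(M,N)$ and assume $N\mid M$. The theorem just proved then forces $\sqrt{f}\in\mathcal{F}_{2N}$, so the remaining task is to show that any $g\in\mathcal{F}_{2N}$ with $g^2\in\mathcal{F}_N$ must lie in $\mathcal{F}_N(\sqrt{j-1728})$.

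Next I would verify that $\sqrt{j-1728}$ itself sits inside $\mathcal{F}_2\subseteq\mathcal{F}_{2N}$, so that $\mathcal{F}_N(\sqrt{j-1728})$ is a genuine subfield of $\mathcal{F}_{2N}$ of degree at most two over $\mathcal{F}_N$. The classical identity $j-1728=E_6^2/\Delta$ together with the factorization $\Delta=\eta^{24}$ realizes $\sqrt{j-1728}$ as $\pm E_6/\eta^{12}$, a weight-zero meromorphic function whose transformation multiplier is the unique quadratic character of $SL_2(\Z)$; a direct check shows that $\Gamma(2)$ lies in its kernel, placing this ratio in $\mathcal{F}_2$.

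The heart of the proof would be a Kummer-theoretic analysis of the quadratic subextensions of $\mathcal{F}_{2N}/\mathcal{F}_N$: every such subextension is $\mathcal{F}_N(\sqrt{g})$ for some $g\in\mathcal{F}_N^{*}$, so it suffices to show the kernel of the restriction map $\mathcal{F}_N^{*}/(\mathcal{F}_N^{*})^2\to\mathcal{F}_{2N}^{*}/(\mathcal{F}_{2N}^{*})^2$ is generated by the class of $j-1728$. When $N$ is odd, $\gcd(N,2)=1$ gives $\Gamma(2N)=\Gamma(N)\cap\Gamma(2)$, so $\mathcal{F}_{2N}=\mathcal{F}_N\mathcal{F}_2$ and the fields $\mathcal{F}_N,\mathcal{F}_2$ are linearly disjoint over $\mathcal{F}_1$; restriction then identifies $\mathrm{Gal}(\mathcal{F}_{2N}/\mathcal{F}_N)\cong\mathrm{Gal}(\mathcal{F}_2/\mathcal{F}_1)\cong SL_2(\Z/2\Z)\cong S_3$. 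Since $S_3$ has a unique index-two subgroup, there is a unique quadratic subextension, which by the previous paragraph must coincide with $\mathcal{F}_N(\sqrt{j-1728})$, so $\sqrt{f}$ lies in it.

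The hard part will be the case of even $N$. Then $\mathcal{F}_2\subseteq\mathcal{F}_N$, so $\mathcal{F}_N(\sqrt{j-1728})=\mathcal{F}_N$, and the claim collapses to the statement that a modular unit $f\in\mathcal{F}_N$ which becomes a square in $\mathcal{F}_{2N}$ is already a square in $\mathcal{F}_N$. Here $\mathrm{Gal}(\mathcal{F}_{2N}/\mathcal{F}_N)$ has \emph{a priori} many quadratic characters, so pure Galois theory will not suffice; the modular-unit hypothesis on $f$ must enter directly. My plan is to exploit the Siegel-function description of modular units from~\cite{kubert:1981}: writing $f$ as a product $\prod g_{(a_i,b_i)}^{e_i}$ up to a constant, with $(a_i,b_i)\in\tfrac{1}{N}\Z^2\setminus\Z^2$, the existence of a modular-function square root should force, via the explicit transformation law of Siegel functions, a parity condition on the exponents $e_i$ that makes $f$ a square in $\mathcal{F}_N$. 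Turning this heuristic into rigor---tracking the leading constants and verifying that the parity condition really produces a square---is where I expect the main technical work to lie.
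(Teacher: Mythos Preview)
Your contrapositive setup and reduction to $\sqrt{f}\in\mathcal{F}_{2N}$ via the preceding theorem are exactly how the paper begins (implicitly). From that point on, though, the paper's entire proof is two sentences: it asserts that ``by the index'' there is a \emph{unique} quadratic extension of $\mathcal{F}_N$ inside $\mathcal{F}_{2N}$, and then observes that $\mathcal{F}_N(\sqrt{j-1728})$ is one such extension. No parity distinction on $N$ is made.

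For odd $N$ your argument is precisely the justification the paper leaves out: $\mathrm{Gal}(\mathcal{F}_{2N}/\mathcal{F}_N)\cong S_3$ has a single subgroup of index two, so the unique quadratic subextension must be $\mathcal{F}_N(\sqrt{j-1728})$, and $\sqrt{f}$ is forced into it. Here you and the paper agree, with you supplying the group-theoretic detail.

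Your concern about even $N$ is not addressed by the paper at all. As you correctly note, when $2\mid N$ one has $\sqrt{j-1728}\in\mathcal{F}_2\subseteq\mathcal{F}_N$, so $\mathcal{F}_N(\sqrt{j-1728})=\mathcal{F}_N$ is not even a proper extension; and the kernel of $GL_2(\Z/2N\Z)/\{\pm I\}\to GL_2(\Z/N\Z)/\{\pm I\}$ is an elementary abelian $2$-group of rank at least $3$, giving many quadratic subextensions rather than one. Thus the paper's ``by the index'' claim, read literally, only handles odd $N$. (The paper's remark that $j-1728$ has a ``$PSL_2(\Z)$-invariant'' square root is also at odds with its being a genuine quadratic generator.) Your proposed Siegel-function analysis for the even case is therefore going beyond what the paper supplies rather than reproducing it; you cannot check that part against the paper because the paper does not contain a corresponding argument.
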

\begin{proof}  By the index, there is a unique quadratic extension between $\mathcal{F}_N$ and $\mathcal{F}_{2N}$.  We observe that $\mathcal{F}_N(\sqrt{j(z) - 1728})$ is such an extension since $j(z)-1728$ has a holomorphic $PSL_2(\Z)$-invariant square root on $\mathcal{H}$.
\end{proof}

The theorem says that if the square root of a modular unit of level $N$ is a modular function on a congruence subgroup then it is level $2N$.  Thus, because the Siegel units $\phi_{u,v}$ for $(u,v) \in \frac{1}{N}\Z$ are level $12N^2$,  it suffices to show the square roots of Siegel functions are not level $24N^2$ in order to conclude that they do not, in fact, have a level at all. For definitions and further discussion of the Siegel units see~\cite{kubertlang:1981}.

\section{Background}\label{background}

\subsection{Modular functions}

Let $\mathcal{H} = \{z\in \C | \operatorname{Im}\{z\}> 0\}$ denote the complex upper half plane; let $\mathcal{H}^{*}=\mathcal{H}\cup\mathbb{P}^{1}(\Q)$ be the extended upper half plane and $\hat{\C}$ the compactified complex plane.  Let $\Gamma$ denote the (inhomogeneous) modular group, or the group of all fractional linear transformations mapping $\mathcal{H}$ to itself.  Then $\Gamma$ is naturally identified with the matrix group  
\[
\Gamma = PSL_{2}(\Z) = SL_{2}(\Z)/\{\pm I\} = \{ \left( \begin{array}{cc} a & b\\ c & d \end{array}\right) | \text{  }a, b, c, d \in \Z,  ad - bc = 1 \} /\{\pm I\},
\]
which is generated by $S=\left( \begin{array}{cc}1 & 1\\ 0 & 1 \end{array}\right)$ and $T=\left( \begin{array}{cc}0 & -1\\ 1 & 0 \end{array}\right)$.    The action of $\Gamma$ on an upper half-plane  variable $z\in \mathcal{H}$ is given via fractional linear transformation:

\[
A \circ z = \frac{az + b}{cz + d}.
\]

A map $f:\mathcal{H}^{*}\rightarrow \hat{\C}$ is called a \emph{modular function (of level one)} if
\begin{enumerate}
\item $f$ is meromorphic on $\mathcal{H}$,
\item $f(A\circ z) = f(z)$ for all $A\in\Gamma$ and $z \in \mathcal{H}^{*}$,
\item there is an $a>0$ so that for $\operatorname{Im}\{z\} > a$, $f(z)$ has an expansion in the local variable at $i \infty$, $q = e^{2\pi i z}$, of the form
\[
f(z) = \sum_{n\geq n_{0}}a_{n}q^{n}, \text{ } n\in\Z, \text{ }a_{n_{0}}\neq 0.
\]
\end{enumerate}
\noindent so $n_{0}$ determines the behavior of $f$ as $z \rightarrow \infty$.  If $n_{0}<0$ then $f(i\infty) = \infty$; if $n_{0}=0$ then $f(i\infty) = a_{0}$; and if $n_{0}>0$ then $f(i\infty) = 0$.  In the last case, we call $f$ a \emph{cusp form}.

Fix a natural number $N>2$.  Let $\Gamma(N) \leq \Gamma$ be the (inhomogeneous) principal congruence subgroup modulo $N$, or the kernel of the reduction mod $N$ map. In other words,

\[
\xymatrix{ 1 \ar[r]  &\Gamma(N) \ar[r]  &\Gamma  \ar[r] & PSL_{2}(\Z/N\Z) \ar[r] &1 }
\]

\noindent is a short exact sequence.  

By convention, we take $\Gamma(1) =\Gamma$.  The upper half-plane modulo the action of $\Gamma$ (written, by abuse of notation, $ \mathcal{H}/  \Gamma$) is a singular surface whose one-point compactification by the image of the point $i \infty$  under the stereographic projection is homeomorphic to the Riemann sphere.  The completed non-singular curve is denoted $X(1)$. Similarly, $\mathcal{H}/\Gamma(N)$ can be compactified by adding finitely many points, the \emph{cusps} of $\Gamma(N)$, or the translates of $i \infty$ under a full set of coset representatives for $PSL_{2}(\Z/N\Z)$ in $\Gamma$.  In this case, the curve is denoted $X(N)$. 

If $H$ is a finite index subgroup in $\Gamma$ the set of \emph{cusps}, or translates of $i \infty$ under a full set of coset representative for $H$ in $\Gamma$, will hereafter be denoted $C(H)$.  A finite index subgroup of $\Gamma$ defined by congruence conditions is called a \emph{congruence subgroup}.  The \emph{conductor} of a congruence subgroup $H$ is the largest $N$ for which $\Gamma(N) \subseteq H$.

\subsection{Modular functions of level $N$}
 A \emph{modular function} for a congruence subgroup $\Gamma(N)$ is a function, $f(z):\mathcal{H}^{*} \rightarrow \hat{\C} $  such that 
\begin{enumerate}
\item $f$ is meromorphic on $\mathcal{H}$,
\item $f(A\circ z)=f(z)$ for all $A \in \Gamma(N)$ and $z \in \mathcal{H}^{*}$,
\item $f(z)$ has an expansion at each of the cusps in the local variable $q = e^{2 \pi i z}$ of the form
\[
f(z) = \sum_{n\geq n_{0}}a_{n}q^{n}, \text{ } n\in\Z, a_{n_{0}}\neq 0.
\]
\end{enumerate}
\noindent If $f$ is modular for $\Gamma(N)$, we say $f$ has \emph{level} $N$.  A modular function of level $N$ descends to a well-defined holomorphic function on $X(N)$. As before, if $n_{0}>0$ for all $\alpha \in C(\Gamma(N))$ then $f(z)$ is called a cusp form for $\Gamma(N)$.


\subsection{The full tower of modular functions $\mathcal{F}$}

The set of modular functions invariant under the full modular group $\Gamma$ is, in fact, a function field of genus one and is generated over  $\C$ by the classical $j$-function,
\[
 j(z)=\frac{1}{q} + 744 + 196884q + 21493760q^{2}+ 864299970q^{3} + O(q^4) .
 \]

 We write $\mathcal{F}_{1}=\Q(j(z))$ and note that $\mathcal{F}_{1}$ is the full field of rational functions on $X(1)$ whose Fourier coefficients are rational. The $j$-function is normalized so that its $q$-expansion at $i\infty$ (which is the only cusp of $\mathcal{H}/\Gamma$) has integral coefficients.  Thus, it is reasonable to define the ring of integers in this field to be $\Z[j]$. 

Furthermore, the set of level $N$ functions together with the $N^{th}$ roots of unity generate a field extension of $\mathcal{F}_{1}$, denoted $\mathcal{F}_{N}$, which is a finite Galois extension of $\mathcal{F}_{1}$ with Galois group $PGL_{2}(\Z/N\Z) \cong \Gamma/\Gamma(N)\times(\Z/N\Z)^{\times}$.  The Galois action is given by writing $PGL_{2}(\Z/N\Z) \cong PSL_{2}(\Z/N\Z) \times (\Z/N\Z)^{\times}$ and letting $PSL_{2}(\Z/N\Z)$ act as usual as fractional linear transformations on $z \in \mathcal{H}$.  A matrix in $PGL_{2}(\Z/N\Z)$ with determinant $d \in (\Z/N\Z)^{\times}$ acts on a (not necessarily primitive) $N^{th}$ root of unity $\zeta$ via $\sigma_{d}:\zeta \mapsto \zeta^{d}$.
\noindent In other words,  if $f$ has Fourier expansion
\[
f(z) = \sum_{n\geq n_{0}}a_{n}q^{n}
\]
\noindent then elements of the form $A = \left( \begin{array}{cc} 1 & 0 \\ 0 & d \end{array}\right) \in PGL_{2}(\Z/N\Z)$ act as follows:

\[
f(A\circ z) = \sum_{n\geq n_{0}}\sigma_{d}(a_{n})q^{n}
\]

\noindent and more general elements $A$ of determinant $d$ act as

\[
f(A \circ z) = \sum_{n\geq n_{0}}\sigma_{d}(a_{n})(A'\circ q)^{n}
\]

\noindent where $A' = \frac{1}{\sqrt{d}}A \in PSL_2(\Z/{N\Z})$.

Taking the integral closure of $\Z[j]$ in $\mathcal{F}_{N}$, we get a ring $R_{N}$, whose units, $U_{N}$, are the \emph{modular units of level N}.  It is not uncommon, however, to extend scalars to $\C$, that is, to study $U_{N}\otimes \C \subseteq R_{N}\otimes \C$. In this setting the set of functions with  multiplicative inverses coincide precisely with the set of function whose divisor of zeros and poles is supported at the cusps of $X(N)$.

Finally, the compositum of the $\mathcal{F}_{N}$ over all $N$ is called the \emph{full tower of modular functions} $\mathcal{F}$. The set of units $U$ in the full tower of modular functions is the direct limit of the $U_{N}$ with respect to the natural inclusion maps.

\nocite{diamondshurman:2005}
\bibliographystyle{alpha}
\bibliography{bibliography.bib}

\begin{thebibliography}{Kub81}

\bibitem[DS05]{diamondshurman:2005}
F.~Diamond and J.~Shurman.
\newblock {\em A First Course in Modular Forms}, volume 228 of {\em Graduate
  Texts in Mathematics}.
\newblock Springer, 2005.

\bibitem[KL81]{kubertlang:1981}
D.~Kubert and S.~Lang.
\newblock {\em Modular Units}, volume 224 of {\em Die Grundlehren der
  Mathematischen Wissenschaften}.
\newblock Springer-Verlag, 1981.

\bibitem[Kub81]{kubert:1981}
D.~Kubert.
\newblock The square root of the siegel group.
\newblock {\em Proc. London Math. Soc.}, 43(2):193--226, 1981.

\bibitem[Sho74]{schoeneberg:1974}
B.~Shoeneberg.
\newblock {\em Elliptic Modular Functions: an Introduction}, volume 203 of {\em
  Die Grundlehren der mathematischen Wissenschaften}.
\newblock Springer-Verlag, 1974.
\newblock Translated from the German by J. R. Smart and E. A. Schwandt.

\end{thebibliography}

\end{document}